\numberwithin{equation}{section}
\newtheorem{theorem}{Theorem}[section]
\newtheorem{corollary}{Corollary}[section]
\newtheorem{definition}{Definition}[section]
\newtheorem{lemma}{Lemma}[section]
\theoremstyle{remark}
\title[On Booth lemniscate of starlike functions]
 {On Booth lemniscate of starlike functions}
\subjclass[2000]{30C45}
\keywords{Booth lemniscate, Subordination, Starlike, Strongly Starlike, Fekete-Szeg\"{o} Inequality.}
\begin{document}
\begin{abstract}
Assume that $\Delta$ is the open unit disk in the complex plane and
$\mathcal{A}$ is the class of normalized analytic functions in
$\Delta$. In this paper we introduce and study the class
\begin{equation*}
  \mathcal{BS}(\alpha):=\left\{f\in \mathcal{A}: \left(\frac{zf'(z)}{f(z)}-1\right)\prec \frac{z}{1-\alpha z^2}, \, z\in\Delta\right\},
\end{equation*}
where $0\leq\alpha\leq1$ and $\prec$ is the subordination relation.
Some properties of this class like differential subordination,
coefficients estimates and Fekete-Szeg\"{o} inequality associated
with the $k$-th root transform are considered.
\end{abstract}
\author[R. Kargar, A. Ebadian and J. Sok\'{o}{\l}]
       {Rahim Kargar, Ali Ebadian and Janusz Sok\'{o}{\l} }
\address{Department of Mathematics, Payame Noor University, Tehran, Iran}
       \email {rkargar@pnu.ac.ir {\rm (Rahim Kargar)}}
\address{Department of Mathematics, Payame Noor University, Tehran, Iran}
       \email {ebadian.ali@gmail.com {\rm (Ali Ebadian)}}
\address{ University of Rzesz\'{o}w, Faculty of Mathematics and Natural
         Sciences, ul. Prof. Pigonia 1, 35-310 Rzesz\'{o}w, Poland}
       \email{jsokol@ur.edu.pl {\rm (Janusz Sok\'{o}{\l})}}
\maketitle
\section{Introduction}

Let $\mathcal{A}$ denote the class of functions $f(z)$ of the
form:
\begin{equation}\label{f}
    f(z)=z+ \sum_{n=2}^{\infty}a_{n}z^{n},
\end{equation}
which are analytic and normalized in the open unit disk $\Delta=\{z\in \mathbb{C} :
|z|<1\}$. The subclass of
$\mathcal{A}$ consisting of all univalent functions $f(z)$ in
$\Delta$ is denoted by $\mathcal{S}$. A function $f\in\mathcal{S}$
is called starlike (with respect to $0$), denoted by
$f\in\mathcal{S}^*$, if $tw\in f(\Delta)$ whenever $w\in f(\Delta)$
and $t\in[0, 1]$. Robertson introduced in \cite{ROB}, the class $\mathcal{S}^*(\gamma)$ of starlike functions of order $\gamma\leq1$,
which is defined by
\begin{equation*}
   \mathcal{S}^*(\gamma):=\left\{ f\in \mathcal{A}:\ \ \mathfrak{Re} \left\{\frac{zf'(z)}{f(z)}\right\}> \gamma, \ z\in
    \Delta\right\}.
\end{equation*}
If $\gamma\in[0,1)$, then a function in $\mathcal{S}^*(\gamma)$ is univalent. In particular we put
$\mathcal{S}^*(0)\equiv \mathcal{S}^*$. We denote by $\mathfrak{B}$ the class of analytic functions $w(z)$ in
$\Delta$ with $w(0) = 0$ and $|w(z)| < 1$, $(z \in \Delta)$.
If $f$ and $g$ are two of the functions in $\mathcal{A}$, we say
that $f$ is subordinate to $g$, written $f (z)\prec g(z)$, if there
exists a $w\in\mathfrak{B}$ such that $f (z) = g(w(z))$, for all
$z\in\Delta$.

Furthermore, if the function $g$ is univalent in $\Delta$, then
we have the following equivalence:
\begin{equation*}
    f (z)\prec g(z) \Leftrightarrow (f (0) = g(0)\quad {\rm and}\quad f (\Delta)\subset g(\Delta)).
\end{equation*}
We now recall from \cite{psok}, a one-parameter family of functions as follows:
\begin{equation}\label{Falpha}
  F_{\alpha}(z):=\frac{z}{1-\alpha z^2}=z+\sum_{n=1}^{\infty}\alpha^{n}z^{2n+1}\qquad (z\in\Delta,~0\leq \alpha\leq1).
\end{equation}
The function $ F_{\alpha}(z)$ is starlike univalent for $0\leq \alpha<1$.
We have also $F_{\alpha}(\Delta)=D(\alpha)$, where
\begin{equation*}\label{D(alpha)}
  D(\alpha)=\left\{x+iy\in\mathbb{C}: ~ \left(x^2+y^2\right)^2-\frac{x^2}{(1-\alpha)^2}-\frac{y^2}{(1+\alpha)^2}<0\right\},
\end{equation*}
when $0\leq \alpha<1$ and
\begin{equation*}\label{D(1)}
  D(1)=\left\{x+iy\in\mathbb{C}: ~ \left(\forall t\in (-\infty,-i/2]\cup [i/2,\infty)\right)[x+iy\neq it]\right\}.
\end{equation*}

\begin{figure}[htp]
 \centering
 \includegraphics[width=7cm]{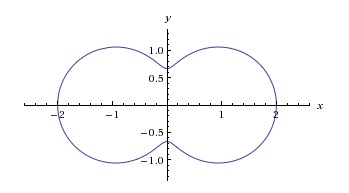}\\
 \caption{The boundary curve of $D(1/2)$}\label{Fig:1}
\end{figure}

The Persian curve (cf. \cite{AAS}) is a plane algebraic curve of order four that is the line
of intersection between the surface of a torus and a plane parallel to its axis. The equation in rectangular coordinates is
\begin{equation*}\label{Persian curve}
  \left(x^2+y^2+p^2+d^2-r^2\right)^2=4d^2\left(x^2+p^2\right),
\end{equation*}
where $r$ is the radius of the circle describing the torus, $d$ is
the distance from the origin to its center and $p$ is the distance
from the axis of the torus to the plane. We remark that a curve
described by
\begin{equation*}\label{Blemn}
    \left(x^2 + y^2\right)^2 - \left(n^4 + 2m^2\right)x^2 - \left(n^4 - 2m^2\right)y^2 = 0\qquad (x, y)\neq(0, 0),
\end{equation*}
is a special case of Persian curve that studied by Booth and is called the Booth lemniscate \cite{Booth}.
The Booth lemniscate is called elliptic if $n^4 > 2m^2$ while, for $n^4 < 2m^2$, it is termed hyperbolic. Thus
it is clear that the curve
\begin{equation*}
  \left(x^2+y^2\right)^2-\frac{x^2}{(1-\alpha)^2}-\frac{y^2}{(1+\alpha)^2}=0\qquad (x, y)\neq(0, 0),
\end{equation*}
is the Booth lemniscate of elliptic type (see figure 1). Two other special case of Persian curve are Cassini oval and Bernoulli lemniscate.

A plane algebraic curve of order four whose equation in Cartesian coordinates has the form:
\begin{equation*}
  \left(x^2 + y^2\right)^2 - 2c^2 \left(x^2 -y^2\right) = a^4 -c^4.
\end{equation*}

The Cassini oval is the set of points such that the product of the
distances from each point to two given points $p_2=(-c,0)$ and
$p_1=(c,0)$ (the foci) is constant. When $a \geq c\sqrt{2}$ the
Cassini oval is a convex curve; when $c<a<c\sqrt{2}$ it is a curve
with "waists" (concave parts); when $a=c$ it is a Bernoulli
lemniscate; and when $a<c$ it consists of two components. Cassini
ovals are related to lemniscates. Cassini ovals were studied by G.
Cassini (17th century) in his attempts to determine the Earth's
orbit.

The Bernoulli lemniscate plane algebraic curve of order four, the equation of which in orthogonal Cartesian coordinates is
\begin{equation*}
  \left(x^2 + y^2\right)^2 - 2a^2 \left(x^2 -y^2\right) =0,
\end{equation*}
and in polar coordinates
\begin{equation*}
  \rho^2=2a^2 \cos 2 \phi.
\end{equation*}

The Bernoulli lemniscate is symmetric about the coordinate origin, which is a node with tangents
$y=\pm x$ and the point of inflection. The product of the distances of any point $M$ to the two given points
$p_1=(-a,0)$ and $p_2=(a,0)$ is equal to the square of the distance between the points $p_1$ and $p_2$.
The Bernoulli lemniscate is a special case of the Cassini ovals, the lemniscates, and the sinusoidal spirals.
The Bernoulli spiral was named after Jakob Bernoulli, who gave its equation in 1694.

In \cite{KES(Complex)}, the authors introduced and studied the class $\mathcal{M}(\delta)$ as follows:\\
{\bf Definition A.} Let $\pi/2\leq \delta<\pi$. Then the function
$f\in\mathcal{A}$ belongs to the class $\mathcal{M}(\delta)$ if $f$
satisfies:
\begin{equation}\label{1definition}
    1+\frac{\delta-\pi}{2 \sin \delta}<
    \mathfrak{Re}\left\{\frac{zf'(z)}{f(z)}\right\} <
    1+\frac{\delta}{2\sin \delta} \qquad (z\in\Delta).
\end{equation}
By definition of subordination  and by \eqref{1definition}, we have that $f\in
\mathcal{M}(\delta)$ if and only if
\begin{equation*}
\left(\frac{z f'(z)}{f(z)}-1\right)\prec\mathcal{B}_\alpha(z):=
\frac{1}{2i\sin\delta}\log\left(\frac{1-z}{1-ze^{-i\delta}}\right)
\qquad (z\in\Delta),
\end{equation*}
where $\pi/2\leq \delta<\pi$. The above function $\mathcal{B}_\alpha(z)$ is convex
univalent in $\Delta$ and maps
$\Delta$ onto $\Gamma_\delta=\{w: (\delta-\pi)/(2\sin \delta)<\mathfrak{Re}\{w\}<\delta/(2\sin \delta)\}$,
or onto the convex hull of three points (one of which may be that point
at infinity) on the boundary of $\Gamma_\delta$. In other words, the image of $\Delta$ may be a vertical
strip when $\pi/2\leq \delta<\pi$, while in other cases, a half strip, a trapezium, or a triangle.

It was proved in \cite{KuOwa}, that for $\alpha<1<\beta$, the following
function $P_{\alpha,\beta}:\Delta\rightarrow \mathbb{C}$, defined by
\begin{equation}\label{P}
   P_{\alpha,\beta}(z)=1+\frac{\beta-\alpha}{\pi}i \log \left(\frac{1-e^{2\pi
   i\frac{1-\alpha}{\beta-\alpha}}z}{1-z}\right)\qquad (z\in\Delta),
\end{equation}
maps $\Delta$ onto a convex domain
\begin{equation}\label{Omega}
    P_{\alpha,\beta}(\Delta)=\{ w\in \mathbb{C}: \alpha<
    \mathfrak{Re}\{w\}<\beta\},
\end{equation}
conformally. Therefore, the function $P_{\alpha,\beta}(z)$ defined
by \eqref{P} is convex univalent in $\Delta$ and has the form:
\begin{equation*}
  P_{\alpha,\beta}(z)=1+\sum_{n=1}^{\infty} B_n z^n,
\end{equation*}
where
\begin{equation}\label{B-n}
B_n=\frac{\beta-\alpha}{n\pi}i \left(1-e^{2n\pi
i\frac{1-\alpha}{\beta-\alpha}}\right)\qquad (n=1,2,\ldots).
\end{equation}

The present authors (see \cite{KES(Siberian)}) introduced the class $\mathcal{V}(\alpha,\beta)$ as follows:\\
{\bf Definition B.} Let $\alpha<1$ and $\beta>1$. Then the
function $f\in\mathcal{A}$ belongs to the class
$\mathcal{V}(\alpha,\beta)$ if $f$ satisfies:
\begin{equation}\label{Bdefinition}
    \alpha<
    \mathfrak{Re}\left\{\left(\frac{z}{f(z)}\right)^2 f'(z)\right\} <\beta \qquad (z\in\Delta).
\end{equation}
Therefore, by definition of subordination, we have
that $f\in \mathcal{V}(\alpha,\beta)$ if and only if
\begin{equation*}
  \left(\frac{z}{f(z)}\right)^2 f'(z)
  \prec P_{\alpha,\beta}(z)\qquad (z\in \Delta).
\end{equation*}

Motivated by Definition A, Definition B and using $F_\alpha$, we
introduce a new class. Our principal definition is the following.
\begin{definition}\label{lemmaiff}
  Let $f\in \mathcal{A}$ and $0\leq \alpha< 1$.
Then $f\in \mathcal{BS}(\alpha)$ if and only if
  \begin{equation}\label{defsoblem}
      \left(\frac{zf'(z)}{f(z)}-1\right)\prec F_\alpha(z)\qquad (z\in\Delta),
  \end{equation}
  where $F_\alpha$ defined by \eqref{Falpha}.
\end{definition}
In our investigation, we require the following result.

\begin{corollary}\label{c7}
We have that $f\in \mathcal{BS}(\alpha)$ if and only if
\begin{equation}\label{1c7}
    f(z)=z\exp\int_0^z \frac{F_{\alpha}(w(t))-1}{t}{\rm d}t\qquad (z\in\Delta),
\end{equation}
for some function $w(z)$, analytic in $\Delta$, with $|w(z)|\leq
|z|$ in $\Delta$.
\end{corollary}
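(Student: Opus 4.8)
The plan is to obtain the representation \eqref{1c7} by integrating the logarithmic derivative of $f$, reading it straight out of the defining subordination. Since $0\le\alpha<1$, the majorant $F_\alpha$ is starlike univalent in $\Delta$, so by the subordination equivalence for univalent functions recorded in the Introduction, the relation \eqref{defsoblem} of Definition~\ref{lemmaiff} is equivalent to the existence of an analytic $w$ on $\Delta$ with $w(0)=0$ and $|w(z)|<1$ such that
\[
\frac{zf'(z)}{f(z)}-1=F_\alpha\big(w(z)\big)\qquad(z\in\Delta).
\]
By the Schwarz lemma, the condition ``$w(0)=0$ and $|w(z)|<1$'' is equivalent to ``$w$ analytic with $|w(z)|\le|z|$ in $\Delta$'', which is exactly the class of $w$ appearing in the statement; note that either form forces $w(0)=0$, a fact that will be essential below.

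First I would treat the forward implication. Setting $p(z):=zf'(z)/f(z)$, so that $p(0)=1$ and $p(z)-1=F_\alpha(w(z))$, I rewrite the identity above as
\[
\frac{f'(z)}{f(z)}-\frac1z=\frac{F_\alpha\big(w(z)\big)}{z}.
\]
Because $F_\alpha(\zeta)=\zeta+\alpha\zeta^3+\cdots$ vanishes at the origin and $w(0)=0$, the right-hand side has a removable singularity at $z=0$ and hence is analytic throughout $\Delta$. Integrating from $0$ to $z$, and using the normalization $f(z)/z\to1$ as $z\to0$ to pin down both the branch of the logarithm and the constant of integration, gives $\log\big(f(z)/z\big)=\int_0^z F_\alpha(w(t))/t\,{\rm d}t$; exponentiating yields the asserted representation.

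For the converse I would start from an arbitrary analytic $w$ with $|w(z)|\le|z|$, define $f$ by \eqref{1c7}, and verify membership. The same removable-singularity observation shows the integrand is analytic at $0$, so the integral defines an analytic function vanishing at the origin; thus $f$ is analytic with $f(0)=0$, and expanding the exponential shows $f'(0)=1$, so $f\in\mathcal{A}$. Logarithmic differentiation of \eqref{1c7} then returns $zf'(z)/f(z)-1=F_\alpha(w(z))$, and since $|w(z)|\le|z|$ exhibits $w$ as a Schwarz function, this is precisely the subordination \eqref{defsoblem}; hence $f\in\mathcal{BS}(\alpha)$.

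I expect the only real care to be needed in the analytic bookkeeping rather than in any deep estimate: one must confirm that $F_\alpha(w(t))/t$ is genuinely analytic at $t=0$ --- which is exactly why the hypothesis $|w(z)|\le|z|$, forcing $w(0)=0$, is the correct one --- and that on the simply connected disk $\Delta$ the branch of $\log(f/z)$ can be chosen so that exponentiation inverts the integration globally. Once these points are settled, both directions follow immediately from logarithmic differentiation, with the univalence of $F_\alpha$ supplying the passage between the subordination and the pointwise identity.
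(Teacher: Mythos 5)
Your proposal is correct and takes essentially the same route as the paper: the subordination \eqref{defsoblem} yields a Schwarz function $w$ with $zf'(z)/f(z)-1=F_\alpha(w(z))$, equivalently $z\left(\log (f(z)/z)\right)'=F_\alpha(w(z))$, and integrating and exponentiating gives the representation, the converse being logarithmic differentiation (which the paper dismisses as ``an easy calculation''). One remark: your derivation produces $f(z)=z\exp\int_0^z F_\alpha(w(t))\,t^{-1}\,{\rm d}t$ \emph{without} the ``$-1$'' appearing in \eqref{1c7} as printed; that ``$-1$'' is a typo in the paper (since $F_\alpha(w(0))=0$, the printed integrand behaves like $-1/t$ near the origin and the integral diverges), and your version --- justified precisely by the removable-singularity check you carry out --- is the one the paper's own proof, its example \eqref{f_0}, and its later formula \eqref{fsharp} actually rely on.
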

\begin{proof}
From \eqref{defsoblem} it follows that there exists a function
$w(z)$, analytic in $\Delta$, with $|w(z)|\leq |z|$ in $\Delta$,
such that
\begin{equation*}
      z\left(\frac{f'(z)}{f(z)}-\frac{1}{z}\right)=F_\alpha(w(z))\qquad (z\in\Delta),
\end{equation*}
or
\begin{equation*}
      z\left(\log\frac{f(z)}{z}\right)'=F_\alpha(w(z))\qquad
      (z\in\Delta).
\end{equation*}
This gives \eqref{1c7}. On the other hand, it is a easy calculation
that a function having the form \eqref{1c7} satisfies condition
\eqref{defsoblem}.
\end{proof}

Applying formula \eqref{1c7} for $w(z)=z$ gives that
\begin{equation}\label{f_0}
    f_0(z)=z\left(\frac{1+\sqrt{\alpha}z}{1-\sqrt{\alpha}z}\right)^{1/\sqrt{\alpha}}
    \qquad (z\in\Delta),
\end{equation}
is in the class $ \mathcal{BS}(\alpha)$.

\begin{lemma}\label{lemReF}
Let $F_{\alpha}(z)$ be given by \eqref{Falpha}. Then
\begin{equation}\label{ReFalpha}
    \frac{1}{\alpha-1}< \mathfrak{Re}\left\{F_{\alpha}(z)\right\}< \frac{1}{1-\alpha}\qquad (0\leq \alpha<1).
\end{equation}
\end{lemma}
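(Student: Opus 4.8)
The plan is to reduce the two-sided estimate to a one-variable maximization on the unit circle, using that $\mathfrak{Re}\{F_\alpha(z)\}$ is harmonic. First I would note that for $0\leq\alpha<1$ the poles $z=\pm1/\sqrt{\alpha}$ of $F_\alpha$ lie outside $\overline{\Delta}$, so $F_\alpha$ is analytic in a neighbourhood of $\overline{\Delta}$ and $u(z):=\mathfrak{Re}\{F_\alpha(z)\}$ is harmonic there. By the maximum and minimum principles for harmonic functions, the extrema of $u$ over $\overline{\Delta}$ are attained on $\partial\Delta$, and, since $F_\alpha$ is non-constant, they are \emph{not} attained in the open disk. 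Hence it suffices to compute $u$ on $|z|=1$: the resulting bounds then hold with strict inequality throughout $\Delta$.

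Next I would set $z=e^{i\theta}$ and evaluate the boundary values. Multiplying numerator and denominator of $F_\alpha(e^{i\theta})$ by $\overline{1-\alpha e^{2i\theta}}$ gives
\begin{equation*}
  F_\alpha(e^{i\theta})=\frac{(1-\alpha)\cos\theta+i(1+\alpha)\sin\theta}{1-2\alpha\cos2\theta+\alpha^2},
\end{equation*}
and then, replacing $\cos2\theta=2\cos^2\theta-1$ in the denominator,
\begin{equation*}
  u(e^{i\theta})=\frac{(1-\alpha)\cos\theta}{(1+\alpha)^2-4\alpha\cos^2\theta}.
\end{equation*}
Putting $c=\cos\theta\in[-1,1]$ reduces the task to finding the extrema on $[-1,1]$ of
\begin{equation*}
  g(c)=\frac{(1-\alpha)c}{(1+\alpha)^2-4\alpha c^2}.
\end{equation*}

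The denominator is positive on $[-1,1]$, since it decreases from $(1+\alpha)^2$ to $(1-\alpha)^2$ as $c^2$ increases from $0$ to $1$; thus $g$ is smooth and a direct differentiation yields
\begin{equation*}
  g'(c)=(1-\alpha)\,\frac{(1+\alpha)^2+4\alpha c^2}{\bigl[(1+\alpha)^2-4\alpha c^2\bigr]^2}>0.
\end{equation*}
Therefore $g$ is strictly increasing, so its maximum is $g(1)=(1-\alpha)/(1-\alpha)^2=1/(1-\alpha)$, and since $g$ is odd its minimum is $g(-1)=-1/(1-\alpha)=1/(\alpha-1)$. Combined with the maximum-principle reduction, this gives the strict bound \eqref{ReFalpha}.

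I expect the only delicate points to be bookkeeping rather than conceptual: verifying that the denominator stays positive on $[-1,1]$, so that no spurious sign changes or interior critical points arise, and making rigorous the passage from the boundary equalities to the \emph{strict} inequalities on the open disk via the non-constancy of $F_\alpha$. Once the conjugation setting up $u(e^{i\theta})$ is done correctly, the computation of $g'(c)$ and the evaluation at $c=\pm1$ are routine.
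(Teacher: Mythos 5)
Your proof is correct and follows essentially the same route as the paper's: restrict to the boundary circle, compute $\mathfrak{Re}\{F_\alpha(e^{i\theta})\}$, and show that the resulting one-variable function $g(c)=(1-\alpha)c/\left((1+\alpha)^2-4\alpha c^2\right)$ is strictly increasing on $[-1,1]$, so the extremes are $g(\pm 1)=\pm 1/(1-\alpha)$. The only difference is that you fill in details the paper leaves implicit, namely the explicit formula for $g'(c)$ (the paper merely asserts $g'>0$) and the appeal to the strict maximum principle for the non-constant harmonic function $\mathfrak{Re}\{F_\alpha\}$ to justify that the inequalities are strict in the open disk.
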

\begin{proof}
If $\alpha=0$, then we have
$-1<\mathfrak{Re}\{F_\alpha\}=\mathfrak{Re}(z)<1$. For $0<\alpha<1$,
the function $\{F_\alpha\}$ does not have any poles in
$\overline{\Delta}$ and is analytic in $\Delta$, thus looking for
the $\min\{\mathfrak{Re}\{F_\alpha(z)\}:~ |z|<1\}$ it is sufficient
to consider it on the boundary $\partial
F_\alpha(\Delta)=\{F_\alpha(e^{i\varphi}):\varphi \in [0,2\pi]\}$. A
simple calculation give us
\begin{equation*}\label{ReFpsok}
    \mathfrak{Re}\left\{F_{\alpha}(e^{i\varphi})\right\}=\frac{(1-\alpha)\cos\varphi}{1+\alpha^2-2\alpha\cos2\varphi}
    \qquad(\varphi\in[0,2\varphi]).
\end{equation*}
So we can see that $\mathfrak{Re}\left\{F_{\alpha}(z)\right\}$ is
well defined also for $\varphi=0$ and $\varphi=2\pi$. Define
\begin{equation*}\label{g(x)}
    g(x)=\frac{(1-\alpha)x}{1+\alpha^2-2\alpha (2x^2-1)}\qquad (-1\leq x\leq 1),
\end{equation*}
then for $0< \alpha<1$, we have $g'(x)>0$. Thus for $-1\leq x\leq
1$, we have
\begin{equation*}
    \frac{1}{\alpha-1}=g(-1)\leq g(x)\leq g(1)=\frac{1}{1-\alpha}.
\end{equation*}
This completes the proof.
\end{proof}
We note that from Lemma \ref{lemReF} and by definition of subordination, the function $f\in\mathcal{A}$ belongs to
the class $\mathcal{BS}(\alpha)$, $0\leq \alpha<1$, if it satisfies the condition
\begin{equation*}\label{defsob}
      \frac{1}{\alpha-1}< \mathfrak{Re}\left(\frac{zf'(z)}{f(z)}-1\right)< \frac{1}{1-\alpha}\qquad (z\in\Delta),
\end{equation*}
or equivalently
    \begin{equation}\label{defsob2}
      \frac{\alpha}{\alpha-1}< \mathfrak{Re}\left(\frac{zf'(z)}{f(z)}\right)<\frac{2-\alpha}{1-\alpha}\qquad (z\in\Delta).
  \end{equation}
It is clear that $\mathcal{BS}(0)\equiv \mathcal{S}(0,2)\subset
\mathcal{S}^*$, where the class $\mathcal{S}(\alpha, \beta)$,
$\alpha< 1$ and $\beta > 1$, was recently considered by K. Kuroki
and S. Owa in \cite{KuOwa}.

\begin{corollary}\label{c0}
If $f\in \mathcal{BS}(\alpha)$, then
\begin{equation}\label{1c0}
      \frac{zf'(z)}{f(z)}\prec P_\alpha(z) \qquad (z\in\Delta),
\end{equation}
where
\begin{equation}\label{2c0}
    P_{\alpha}(z)=1+\frac{2}{\pi(1-\alpha)}i \log
    \left(\frac{1-e^{\pi
    i(1-\alpha)^2}z}{1-z}\right)\qquad (z\in\Delta),
\end{equation}
is convex univalent in $\Delta$.
\end{corollary}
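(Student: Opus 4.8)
The plan is to extract from Lemma~\ref{lemReF} a vertical strip containing every value of $zf'(z)/f(z)$, to realize that strip as the image of the Kuroki--Owa map \eqref{P}, and then to close by transitivity of subordination. By Definition~\ref{lemmaiff}, $f\in\mathcal{BS}(\alpha)$ means $zf'(z)/f(z)-1\prec F_\alpha(z)$, so the values of $zf'(z)/f(z)-1$ lie in $F_\alpha(\Delta)=D(\alpha)$. Lemma~\ref{lemReF} confines $D(\alpha)$ to the strip $\{w:\ 1/(\alpha-1)<\mathfrak{Re}\,w<1/(1-\alpha)\}$, and adding $1$ places all values of $zf'(z)/f(z)$ in
\begin{equation*}
  \Omega_\alpha:=\left\{w\in\mathbb{C}:\ \frac{\alpha}{\alpha-1}<\mathfrak{Re}\,w<\frac{2-\alpha}{1-\alpha}\right\},
\end{equation*}
which is exactly the two-sided bound \eqref{defsob2}; note that $\Omega_\alpha$ is symmetric about the line $\mathfrak{Re}\,w=1$.

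Next I would exhibit the conformal map of $\Delta$ onto $\Omega_\alpha$ in the form \eqref{P}. With strip endpoints $a=\alpha/(\alpha-1)$ and $b=(2-\alpha)/(1-\alpha)$ one has $b-a=2/(1-\alpha)$, so the outer factor $\frac{b-a}{\pi}i=\frac{2}{\pi(1-\alpha)}i$ matches \eqref{2c0}. Since $1-a=1/(1-\alpha)$, the rotation exponent is $2\pi i\,(1-a)/(b-a)=\pi i$, whence $e^{2\pi i(1-a)/(b-a)}=-1$ and the logarithm collapses to $\log\bigl((1+z)/(1-z)\bigr)$, reflecting the symmetry of $\Omega_\alpha$. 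The result is $P_{a,b}$ with these parameters: convex univalent (as recorded after \eqref{Omega}), with $P_\alpha(0)=1$ and $P_\alpha(\Delta)=\Omega_\alpha$.

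Finally I would apply the univalent subordination criterion from the introduction: $P_\alpha$ is univalent, $P_\alpha(0)=1$ equals the value of $zf'(z)/f(z)$ at $z=0$, and $zf'(z)/f(z)$ maps $\Delta$ into $\Omega_\alpha=P_\alpha(\Delta)$; hence $zf'(z)/f(z)\prec P_\alpha(z)$, which is \eqref{1c0}. The one delicate point is bookkeeping of constants in the middle step: my computation gives $(1-a)/(b-a)=1/2$, i.e.\ rotation factor $e^{\pi i}=-1$, so I would recheck the exponent displayed in \eqref{2c0} against this value before finalizing. Everything else is a routine pairing of Lemma~\ref{lemReF} with the standard image-inclusion test for subordination to a univalent function.
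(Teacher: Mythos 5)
Your argument follows exactly the paper's route: the paper's own (two-line) proof is precisely "by \eqref{defsob2}, $zf'(z)/f(z)$ lies in a strip of the form \eqref{Omega}; now apply the definition of subordination and the Kuroki--Owa function \eqref{P}." Your version just carries out the computation that the paper leaves implicit, and it is correct. Moreover, the concern you flag at the end is warranted, and you should trust your own arithmetic over the printed formula: with strip endpoints $a=\alpha/(\alpha-1)$ and $b=(2-\alpha)/(1-\alpha)$ one has $b-a=2/(1-\alpha)$ and $1-a=1/(1-\alpha)$, so $(1-a)/(b-a)=1/2$ and the rotation factor in \eqref{P} is $e^{\pi i}=-1$, giving
\begin{equation*}
  P_{\alpha}(z)=1+\frac{2}{\pi(1-\alpha)}\,i\log\left(\frac{1+z}{1-z}\right),
\end{equation*}
which maps $\Delta$ onto exactly $\left\{w:\ \alpha/(\alpha-1)<\mathfrak{Re}\,w<(2-\alpha)/(1-\alpha)\right\}$. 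The exponent $e^{\pi i(1-\alpha)^2}$ displayed in \eqref{2c0} appears to come from substituting the Booth parameter $\alpha$, instead of the strip endpoint $a$, into the numerator $1-\alpha$ of the Kuroki--Owa exponent: $2\pi i(1-\alpha)/(b-a)=\pi i(1-\alpha)^2$. With that exponent the image of $\Delta$ is the strip $\left\{\alpha<\mathfrak{Re}\,w<\alpha+2/(1-\alpha)\right\}$, which for $0<\alpha<1$ does not contain $1+D(\alpha)$ (whose real parts reach down to $\alpha/(\alpha-1)<\alpha$), so \eqref{1c0} as printed would even fail for the extremal function $f_0$ of \eqref{f_0}. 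In short: your proof is the paper's proof done carefully, and the exponent in \eqref{2c0} should be corrected to $e^{\pi i}=-1$.
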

\begin{proof}
If $f\in \mathcal{BS}(\alpha)$, then it satisfies \eqref{defsob2} or
$zf'(z)/f(z)$ lies in a strip of the form \eqref{Omega}. Then
applying the definition of subordination and  the function
\eqref{P}, we obtain \eqref{1c0} and \eqref{2c0}.
\end{proof}
For the proof of our main results, we need the following Lemma.

\begin{lemma}\label{lem1.3}(See \cite{Rog})
Let $q(z)=\sum_{n=1}^{\infty}C_nz^n$ be analytic and
univalent in $\Delta$, and suppose that $q(z)$ maps $\Delta$
onto a convex domain. If $p(z) = \sum_{n=1}^{\infty}A_nz^n$ is
analytic in $\Delta$ and satisfies the following subordination
\begin{equation*}
p(z)\prec q(z)\qquad (z\in\Delta),
\end{equation*}
then
\begin{equation*}
|A_n|\leq |C_1|\qquad n\geq 1.
\end{equation*}
\end{lemma}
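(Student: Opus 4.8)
The plan is to exploit the convexity of $q(\Delta)$ together with a root-of-unity averaging that isolates the coefficient $A_n$, and then to finish with a Schwarz-lemma estimate. Throughout I write $\varepsilon=e^{2\pi i/n}$, and I note at the outset that $C_1=q'(0)\neq0$ since $q$ is univalent, so the bound is nontrivial.

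First I would introduce the averaged function
\begin{equation*}
\Psi(z):=\frac{1}{n}\sum_{k=0}^{n-1}p(\varepsilon^k z)\qquad(z\in\Delta).
\end{equation*}
Using the elementary identity $\tfrac{1}{n}\sum_{k=0}^{n-1}\varepsilon^{km}$, which equals $1$ when $n\mid m$ and $0$ otherwise, the coefficient of $z^m$ in $\Psi$ is $A_m$ exactly when $n\mid m$ and vanishes otherwise. Hence $\Psi(z)=\sum_{j=1}^{\infty}A_{jn}z^{jn}=A_nz^n+A_{2n}z^{2n}+\cdots$, so that $A_n$ is precisely the leading coefficient of $\Psi$.

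Next I would verify that $\Psi\prec q$. Since $p\prec q$ and $q$ is univalent, the equivalence recorded in the Introduction gives $p(\Delta)\subset q(\Delta)$, so each point $p(\varepsilon^k z)$ lies in $q(\Delta)$. Because $q(\Delta)$ is convex, the average $\Psi(z)$ of these $n$ points again lies in $q(\Delta)$, and clearly $\Psi(0)=0=q(0)$. Applying the same subordination criterion for the univalent function $q$ once more yields $\Psi(z)\prec q(z)$, so there exists a Schwarz function $W\in\mathfrak{B}$ (namely $W=q^{-1}\circ\Psi$) with $\Psi(z)=q(W(z))$ on $\Delta$.

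It remains to estimate $A_n$. If $A_n=0$ there is nothing to prove, so assume $A_n\neq0$. Expanding $q(W)=C_1W+C_2W^2+\cdots$ and letting $\ell\geq1$ denote the order of the zero of $W$ at the origin (so $W(z)=c_\ell z^\ell+\cdots$ with $c_\ell\neq0$), the lowest-order term of $\Psi$ is $C_1c_\ell z^\ell$. Comparing with $\Psi(z)=A_nz^n+\cdots$ forces $\ell=n$ and $A_n=C_1c_n$. Finally, since $W$ maps $\Delta$ into $\Delta$ with a zero of order $n$ at the origin, the quotient $W(z)/z^n$ is analytic in $\Delta$ and bounded in modulus by $1$ by the maximum-modulus principle (the generalized Schwarz lemma, using $|W(z)|<1$ and $|z^n|\to1$ as $|z|\to1$), whence $|c_n|\leq1$. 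Therefore $|A_n|=|C_1|\,|c_n|\leq|C_1|$, as claimed. The main subtlety lies in the third step: recognizing that the convexity of $q(\Delta)$ is exactly what keeps the averaged function $\Psi$ subordinate to $q$, which simultaneously isolates $A_n$ as a leading coefficient and preserves the subordination structure needed for the Schwarz estimate.
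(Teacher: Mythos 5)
The paper does not prove this lemma at all: it is quoted directly from Rogosinski's 1943 paper, so there is no in-paper argument to compare against. Your proof is correct and complete, and it is in fact the classical Rogosinski argument from the cited source: the root-of-unity average $\Psi$ isolates $A_n$ as a leading coefficient, convexity of $q(\Delta)$ together with the univalence criterion for subordination keeps $\Psi\prec q$, and the generalized Schwarz lemma applied to $W=q^{-1}\circ\Psi$ (with the degenerate cases $A_n=0$ and $W\equiv 0$ properly set aside) yields $|A_n|=|C_1||c_n|\leq|C_1|$. No gaps.
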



\section{ Main Results}
The first main result is the following theorem.
\begin{theorem}\label{Th.f(z)/z}
Let $f\in \mathcal{A}$ and $0\leq\alpha<1$. If $f\in
\mathcal{BS}(\alpha)$ then
\begin{equation}\label{11Th.f(z)/z}
    \log\frac{f(z)}{z}\prec\int_0^z \frac{P_{\alpha}(t)-1}{t}{\rm d}t\qquad (z\in\Delta),
\end{equation}
where
\begin{equation*}
    P_{\alpha}(z)-1=\frac{2}{\pi(1-\alpha)}i \log
    \left(\frac{1-e^{\pi
    i(1-\alpha)^2}z}{1-z}\right)\qquad (z\in\Delta)
\end{equation*}
and
\begin{equation*}
    \widetilde{P}_{\alpha}(z)=\int_0^z \frac{P_{\alpha}(t)-1}{t}{\rm d}t\qquad (z\in\Delta),
\end{equation*}
are convex univalent in $\Delta$.
\end{theorem}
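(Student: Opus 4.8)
The plan is to reduce everything to the subordination $zf'(z)/f(z)\prec P_\alpha(z)$ supplied by Corollary \ref{c0}, and then to push that through the integral (Alexander) operator $g\mapsto\int_0^z g(t)/t\,{\rm d}t$. First I would record the elementary logarithmic-derivative identity. Since $f\in\mathcal{A}$, the quotient $f(z)/z=1+a_2z+\cdots$ is non-vanishing, so $\log(f(z)/z)$ is analytic in $\Delta$, vanishes at the origin, and
\[
z\left(\log\frac{f(z)}{z}\right)'=\frac{zf'(z)}{f(z)}-1,
\]
whence, integrating,
\[
\log\frac{f(z)}{z}=\int_0^z\frac{1}{t}\left(\frac{tf'(t)}{f(t)}-1\right){\rm d}t.
\]
By Corollary \ref{c0} we have $zf'(z)/f(z)\prec P_\alpha(z)$, equivalently $\frac{zf'(z)}{f(z)}-1\prec P_\alpha(z)-1=:q(z)$; and since $P_\alpha$ is convex univalent, the translate $q=P_\alpha-1$ is convex univalent with $q(0)=0$, which disposes of the first half of the convexity claim.

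Next I would establish that $\widetilde P_\alpha$ is convex univalent. Observe that $\widetilde P_\alpha$ is precisely the Alexander transform of $q$, namely $\widetilde P_\alpha(z)=\int_0^z q(t)/t\,{\rm d}t$, so that $z\widetilde P_\alpha'(z)=q(z)$. Because $q$ is convex univalent with $q(0)=0$ (and $q'(0)=P_\alpha'(0)\neq0$ by univalence), convexity implies starlikeness, so $q$ is starlike. Alexander's theorem --- which asserts that $g$ is starlike if and only if $\int_0^z g(t)/t\,{\rm d}t$ is convex --- then yields at once that $\widetilde P_\alpha$ is convex univalent, after the harmless normalization $q=q'(0)\,\tilde q$ with $\tilde q\in\mathcal{A}$ (scaling by a nonzero constant affects neither convexity nor univalence).

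For the subordination \eqref{11Th.f(z)/z} itself, the key observation is that the Alexander operator is convolution with the convex function $\phi_0(z):=-\log(1-z)=\sum_{n\geq1}z^n/n$: for every $g$ analytic with $g(0)=0$ one has $\int_0^z g(t)/t\,{\rm d}t=(g*\phi_0)(z)$. Hence $\log(f(z)/z)=\big(\frac{zf'}{f}-1\big)*\phi_0$ while $\widetilde P_\alpha=(P_\alpha-1)*\phi_0$. Since $\frac{zf'}{f}-1\prec P_\alpha-1$ and $\phi_0$ is convex, the subordination-preserving property of convolution with a convex function gives $\big(\frac{zf'}{f}-1\big)*\phi_0\prec(P_\alpha-1)*\phi_0$, which is exactly \eqref{11Th.f(z)/z}.

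The only delicate point --- and the step I expect to be the main obstacle --- is the invocation of this convolution lemma: one must cite (or reprove) the precise statement that convolving the subordination $p\prec q$ against the fixed convex kernel $\phi_0$ preserves it. Here both functions entering the convolution are convex ($q=P_\alpha-1$ and $\phi_0$), so any standard form of the Ruscheweyh--Sheil-Small theory underlying the P\'olya--Schoenberg conjecture applies. Alternatively, one can bypass convolutions entirely and phrase the step as a first-order differential subordination: since $z(\log(f/z))'\prec z\widetilde P_\alpha'(z)$ with $\widetilde P_\alpha$ convex univalent (as just proved), the corresponding Miller--Mocanu implication yields $\log(f/z)\prec\widetilde P_\alpha$. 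Everything else is the routine logarithmic-derivative computation together with a direct application of Alexander's theorem.
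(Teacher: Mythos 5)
Your proof is correct, and its skeleton is the same as the paper's: both start from Corollary \ref{c0}, rewrite $zf'(z)/f(z)-1$ as $z\left(\log (f(z)/z)\right)'$, and push the resulting subordination through the integral operator $g\mapsto\int_0^z g(t)t^{-1}{\rm d}t$. The difference lies entirely in how the key step is justified. The paper invokes, as a known fact, the implication \eqref{3Th.f(z)/z}: if $\mathcal{F}$ is convex univalent and $g\prec\mathcal{F}$, then $\int_0^z g(t)t^{-1}{\rm d}t\prec\int_0^z \mathcal{F}(t)t^{-1}{\rm d}t$ and the latter integral is again convex univalent. You do not cite this lemma as a black box; you reconstruct it: the convexity of $\widetilde{P}_{\alpha}$ via Alexander's theorem (a convex univalent function vanishing at the origin is starlike, and the Alexander transform of a starlike function is convex, the nonzero factor $q'(0)$ being harmless), and the subordination either via the Ruscheweyh--Sheil-Small convolution theorem with the convex kernel $-\log(1-z)$, or via the result that $zp'\prec zq'$ with $q$ convex univalent and $p(0)=q(0)$ forces $p\prec q$ (this is really Suffridge's theorem, though it is also recoverable from the Miller--Mocanu theory you name). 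Both invocations are legitimate and correctly applied, so your argument is a self-contained expansion of the step the paper handles by citation; the paper's route buys brevity, yours makes explicit exactly which classical results underlie \eqref{3Th.f(z)/z}.

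One minor slip worth correcting: the non-vanishing of $f(z)/z$ in $\Delta$ does not follow from $f\in\mathcal{A}$ alone (for instance $f(z)=z+2z^{2}$ vanishes at $z=-1/2$); it is implicit in $f\in\mathcal{BS}(\alpha)$, since the subordination \eqref{defsoblem} presupposes that $zf'(z)/f(z)$ is analytic in $\Delta$. The paper makes the same tacit assumption, so nothing is lost, but your stated justification should be amended.
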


\begin{proof}

If $f\in \mathcal{BS}(\alpha)$, then by \eqref{1c0} it satisfies
\begin{equation}\label{2Th.f(z)/z}
    z\left\{\log\frac{f(z)}{z}\right\}'\prec P_\alpha(z)-1.
\end{equation}
It is known that if $\mathcal{F}(z)$ is convex univalent in $\Delta$, then
\begin{equation}\label{3Th.f(z)/z}
     \left[f(z)\prec \mathcal{F}(z)\right]\Rightarrow\left[\int_0^z \frac{f(t)}{t}{\rm d}t\prec \int_0^z \frac{\mathcal{F}(t)}{t}{\rm d}t\right]
\end{equation}
and
\begin{equation*}
    \widetilde{F}(z)=\int_0^z \frac{\mathcal{F}(t)}{t}{\rm d}t,
\end{equation*}
is convex univalent in $\Delta$. By Corollary \ref{c0}, we know that
$P_\alpha(z)-1$ is  convex univalent in $\Delta$. Therefore,
applying \eqref{3Th.f(z)/z} in \eqref{2Th.f(z)/z} gives \eqref{11Th.f(z)/z} with convex univalent
$\widetilde{P}_{\alpha}(z)$.
\end{proof}

\begin{corollary}\label{c1}
If $f\in \mathcal{BS}(\alpha)$ and $|z|=r<1$, then
\begin{equation}\label{1c1}
      \min_{|z|=r}\left|\exp\widetilde{P}_{\alpha}(z)\right|\leq\left|\frac{f(z)}{z}\right|
      \leq\max_{|z|=r}\left|\exp\widetilde{P}_{\alpha}(z)\right|.
\end{equation}
\end{corollary}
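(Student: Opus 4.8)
The plan is to read off both bounds directly from the subordination already established in Theorem \ref{Th.f(z)/z}, using nothing more than Schwarz's lemma and the extremum principles for moduli of analytic functions. First I would invoke that theorem to obtain $\log(f(z)/z)\prec\widetilde{P}_{\alpha}(z)$, and then unpack the definition of subordination: there exists $w\in\mathfrak{B}$, i.e. $w$ analytic with $w(0)=0$ and $|w(z)|<1$, such that
\begin{equation*}
\log\frac{f(z)}{z}=\widetilde{P}_{\alpha}(w(z))\qquad (z\in\Delta).
\end{equation*}
By the Schwarz lemma we have $|w(z)|\leq|z|$, so that for every $z$ with $|z|=r$ the point $\zeta=w(z)$ lies in the closed disk $\overline{\Delta}_{r}:=\{\zeta\in\mathbb{C}:|\zeta|\leq r\}$.

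Next I would exponentiate the displayed identity. This gives
\begin{equation*}
\left|\frac{f(z)}{z}\right|=\left|\exp\widetilde{P}_{\alpha}(w(z))\right|\qquad (|z|=r),
\end{equation*}
so the quantity to be estimated is exactly the modulus of the analytic function $\exp\widetilde{P}_{\alpha}$ evaluated at a point of the closed disk $\overline{\Delta}_{r}$.

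The final step is to transfer the estimate to the boundary circle. Since the exponential never vanishes, $\exp\widetilde{P}_{\alpha}$ is a nonvanishing analytic function on $\overline{\Delta}_{r}$; hence both the maximum modulus principle and the minimum modulus principle apply, and the extreme values of $\left|\exp\widetilde{P}_{\alpha}\right|$ over the closed disk are attained on its boundary $|\zeta|=r$. Combining this with $w(z)\in\overline{\Delta}_{r}$ yields
\begin{equation*}
\min_{|\zeta|=r}\left|\exp\widetilde{P}_{\alpha}(\zeta)\right|\leq\left|\exp\widetilde{P}_{\alpha}(w(z))\right|\leq\max_{|\zeta|=r}\left|\exp\widetilde{P}_{\alpha}(\zeta)\right|,
\end{equation*}
which is precisely \eqref{1c1}.

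I do not anticipate a serious obstacle here; the only point needing care is the invocation of the minimum modulus principle, which is legitimate solely because $\exp\widetilde{P}_{\alpha}$ has no zeros in $\overline{\Delta}_{r}$, a property guaranteed automatically by the exponential form. It is worth emphasizing that the convex univalence of $\widetilde{P}_{\alpha}$ supplied by Theorem \ref{Th.f(z)/z} is not actually needed for these modulus bounds; what does the work is the subordination relation together with Schwarz's lemma, so the argument is essentially a standard growth-type estimate for subordinate functions.
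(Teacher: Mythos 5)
Your proof is correct, and it follows the same basic route as the paper: both arguments start from the subordination $\log(f(z)/z)\prec\widetilde{P}_{\alpha}(z)$ of Theorem \ref{Th.f(z)/z} and convert it into a two-sided modulus estimate at radius $r$. The difference lies in how the final step is justified. The paper exponentiates the subordination to get $f(z)/z\prec\exp\widetilde{P}_{\alpha}(z)$, asserts that $\exp\widetilde{P}_{\alpha}(z)$ is convex univalent, and reads off \eqref{1c1} from the standard growth estimate for functions subordinate to a univalent function. You instead unpack the subordination through the Schwarz function $w$, use $|w(z)|\le|z|$, and apply the maximum and minimum modulus principles to $\exp\widetilde{P}_{\alpha}$ on the closed disk $|\zeta|\le r$. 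Your version buys something real: the paper's side claim is both unverified and unnecessary, since convexity or univalence of $\widetilde{P}_{\alpha}$ does not automatically survive composition with $\exp$ (the exponential is injective only on domains containing no two points that differ by a nonzero integer multiple of $2\pi i$, and neither property of the composition is checked in the paper), whereas your argument uses only what is actually available and needed --- the subordination, the Schwarz lemma, and the nonvanishing of the exponential, which is precisely what licenses the minimum modulus principle for the lower bound. Your closing remark that the convex univalence of $\widetilde{P}_{\alpha}$ plays no role in \eqref{1c1} is therefore exactly right.
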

\begin{proof}
Subordination \label{1Th.f(z)/z} implies
\begin{equation}\label{2c1}
    \frac{f(z)}{z}\prec\exp\widetilde{P}_{\alpha}(z)
\end{equation}
and $\exp\widetilde{P}_{\alpha}(z)$ is convex univalent. Then
\eqref{2c1} implies  \eqref{1c1}.
\end{proof}

We now obtain coefficients estimates for functions belonging to the class $\mathcal{BS}(\alpha)$.
\begin{theorem}\label{t2.1}
 Assume that the function $f$ of the form \eqref{f} belongs to the class $\mathcal{BS}(\alpha)$ where $0\leq \alpha\leq 3-2\sqrt{2}$. then $|a_2|\leq 1 $ and
 \begin{equation}\label{est an}
   |a_n|\leq \frac{1}{n-1}\prod_{k=2}^{n-1}\left(\frac{k}{k-1}\right)\qquad (n=3,4,\ldots).
 \end{equation}
\end{theorem}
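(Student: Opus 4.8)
The plan is to start from Definition~\ref{lemmaiff}: since $f\in\mathcal{BS}(\alpha)$, there exists $w\in\mathfrak{B}$ with
\begin{equation*}
  \frac{zf'(z)}{f(z)}-1 = F_\alpha(w(z)) = \frac{w(z)}{1-\alpha w(z)^2}.
\end{equation*}
Write the Schwarz function as $w(z)=\sum_{k=1}^\infty c_k z^k$ with $|c_1|\le 1$ (indeed $\sum|c_k|^2\le 1$ by the classical Schwarz–Pick coefficient bound). First I would clear denominators to obtain the identity
\begin{equation*}
  zf'(z) = f(z)\bigl(1 + F_\alpha(w(z))\bigr),
\end{equation*}
and then expand both sides as power series. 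Matching the coefficient of $z^n$ on each side produces a recursion expressing $(n-1)a_n$ in terms of $a_2,\dots,a_{n-1}$ and the Taylor coefficients of $F_\alpha(w(z))$. Since $F_\alpha(\zeta)=\zeta+\alpha\zeta^3+\alpha^2\zeta^5+\cdots$ has only odd powers, and $w$ vanishes at $0$, the coefficient of $z^n$ in $F_\alpha(w(z))$ is a polynomial in $c_1,\dots,c_{n-1}$ whose leading behaviour is governed by $c_{n-1}$ together with cross terms; the hypothesis $0\le\alpha\le 3-2\sqrt2$ is what will keep these contributions under control.

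**The base case and the estimate on the auxiliary coefficients.**

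For $|a_2|\le 1$ the recursion gives $a_2 = c_1$ directly (the coefficient of $z$ in $F_\alpha(w)$ is $c_1$), so $|a_2|=|c_1|\le 1$. The crucial intermediate step is to bound the coefficient of $z^{n-1}$ in the expansion of $zf'(z)/f(z)-1$, call it $b_{n-1}$, by showing
\begin{equation*}
  |b_{n-1}| \le 1 \qquad (n\ge 2),
\end{equation*}
uniformly. Here the role of $3-2\sqrt2$ becomes clear: one checks that $\frac{1}{1-\alpha z^2}$ maps $\Delta$ into a disk, and the condition $\alpha\le 3-2\sqrt2$ is exactly what guarantees that $F_\alpha$ is subordinate to (a rotation of) the function $z/(1-z)$ or, more precisely, that the relevant Taylor coefficients of $F_\alpha(w(z))$ are dominated by those of the Koebe-type extremal. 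I would make this precise by writing $F_\alpha(w(z)) = w(z)/(1-\alpha w(z)^2)$ and noting that for $\alpha$ in this range the function $F_\alpha$ has the property that its composition with any Schwarz function has $z^k$-coefficient bounded by $1$ in modulus --- this is where I expect to invoke Lemma~\ref{lem1.3} or a direct Schwarz-lemma argument on the disk $D(\alpha)$.

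**Induction to close the bound.**

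With $|b_{n-1}|\le 1$ in hand, I would run an induction on $n$ using the recursion. Assuming the bound \eqref{est an} for all indices up to $n-1$, the recursion has the schematic form
\begin{equation*}
  (n-1)|a_n| \le \sum_{j=1}^{n-1} |a_j|\,|b_{n-j}|,
\end{equation*}
with $a_1=1$, and substituting the inductive bounds together with $|b_k|\le 1$ collapses the sum telescopically into the product $\prod_{k=2}^{n-1}\frac{k}{k-1}$. The extremal function $f_0$ from \eqref{f_0}, obtained by taking $w(z)=z$, should realize equality and serves as a consistency check that the constant $\frac{1}{n-1}\prod_{k=2}^{n-1}\frac{k}{k-1}$ is sharp.

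**Where the difficulty lies.**

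The main obstacle is the middle step: proving $|b_{n-1}|\le 1$ for \emph{all} $n$ under the single numerical hypothesis $0\le\alpha\le 3-2\sqrt2$. The coefficients of $F_\alpha(w(z))$ mix all the $c_k$ nonlinearly through the geometric series in $\alpha w^2$, so a naive triangle-inequality bound would lose the constant. The delicate point is to organize these contributions --- likely by grouping the $\alpha^m$ terms and using $\sum|c_k|^2\le 1$ rather than $\sum|c_k|\le 1$ --- so that the threshold $3-2\sqrt2$ (the value of $\alpha$ at which $1/(1-\alpha)=\dots$ forces the real-part strip from \eqref{defsob2} to stay within the unit-coefficient regime) emerges naturally. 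I would expect the author to have isolated this as the key lemma, and the remainder of the argument to be the routine telescoping induction described above.
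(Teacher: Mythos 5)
Your skeleton coincides with the paper's: write $zf'(z)=p(z)f(z)$ with $p(z)=1+F_\alpha(w(z))$, equate coefficients to get $na_n=p_{n-1}+a_2p_{n-2}+\cdots+a_{n-1}p_1+a_n$, and close with a telescoping induction; the base case $|a_2|=|c_1|\le1$ is also correct. But the step you yourself flag as ``the main obstacle'' --- the uniform bound $|p_n|\le1$ on the coefficients of $zf'(z)/f(z)$ --- is exactly where your proposal has a genuine gap, and the mechanisms you sketch for it would fail. The hypothesis $0\le\alpha\le3-2\sqrt{2}$ has nothing to do with $F_\alpha$ being subordinate to a rotation of $z/(1-z)$: already for $\alpha=0$ one has $F_0(z)=z$, whose image is the whole unit disk, and that is contained in no half-plane $\{w:\mathfrak{Re}(e^{i\theta}w)>-1/2\}$, so this subordination fails in the simplest case. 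Neither the Schwarz--Pick inequality $\sum|c_k|^2\le1$ nor any analysis of the nonlinear mixing of the $c_k$ inside $F_\alpha(w(z))$ is needed. The missing idea is a single geometric fact: $F_\alpha(\Delta)$, the region $D(\alpha)$ bounded by the Booth lemniscate, is a \emph{convex} domain precisely when $0\le\alpha\le3-2\sqrt{2}$ (Corollary 3.3 of \cite{psok}, which the paper quotes). With that in hand one works at the level of subordination, not of the Schwarz function: the definition of the class gives $p(z)-1\prec F_\alpha(z)$ outright, and Rogosinski's Lemma \ref{lem1.3}, whose only hypothesis is that the superordinate function maps $\Delta$ onto a convex domain, instantly yields $|p_n|\le|C_1|=1$, since the coefficient of $z$ in $F_\alpha(z)$ is $1$. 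That one citation plus Lemma \ref{lem1.3} is the entire content of the step you left open; without it the argument does not go through, and your proposed substitutes do not supply it.

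Two smaller corrections. The product in \eqref{est an} telescopes, $\prod_{k=2}^{n-1}k/(k-1)=n-1$, so the assertion is simply $|a_n|\le1$; your claim that the function generated by $w(z)=z$ attains equality is unfounded, since at $\alpha=0$ that function is $z e^{z}$, whose $n$-th coefficient $1/(n-1)!$ is strictly below $1$ for $n\ge3$ --- the theorem claims no sharpness, and a triangle-inequality recursion of this kind is not expected to be sharp beyond $n=2$. Finally, the induction you describe only schematically is the routine part and matches the paper: one proves $\sum_{k=1}^{n-1}|a_k|\le\prod_{k=2}^{n-1}\bigl(1+\frac{1}{k-1}\bigr)$ by induction on $n$, splitting off the last term $|a_{n-1}|$.
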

\begin{proof}
  Assume that $f\in\mathcal{BS}(\alpha)$. Then from Definition \ref{lemmaiff} we have
  \begin{equation}\label{1proof}
    p(z)\prec 1+ F_\alpha(z)=1+z+\alpha z^3+\cdots\qquad (z\in\Delta),
  \end{equation}
  where
  \begin{equation}\label{2proof}
    zf'(z)=p(z)f(z).
  \end{equation}
  We note that $F_\alpha$ is convex function for $0\leq \alpha\leq 3-2\sqrt{2}$ (see \cite[Corollary 3.3]{psok}).
  If we define $p(z)=1+\sum_{n=1}^{\infty}p_nz^n$, then from Lemma \ref{lem1.3}, we have
  \begin{equation}\label{3proof}
    |p_n|\leq 1.
  \end{equation}
    Equating the coefficients of $z^n$ on both sides of \eqref{2proof}, we find the following relation between the coefficients:
  \begin{equation}\label{4proof}
    na_n=p_{n-1}+a_2p_{n-2}+\cdots+a_{n-1}p_1+a_n.
  \end{equation}
  Making use of \eqref{3proof} and \eqref{4proof}, we get
  \begin{equation}\label{5proof}
    |a_n|\leq\frac{1}{n-1}\sum_{k=1}^{n-1}|a_{k}|\qquad |a_1|=1.
  \end{equation}
  Obvious that, from \eqref{5proof} we have $|a_2|\leq 1$. We now need show that
  \begin{equation}\label{6proof}
    \frac{1}{n-1}\sum_{k=1}^{n-1}|a_{k}|\leq \frac{1}{n-1}\prod_{k=2}^{n-1}\left(1+\frac{1}{k-1}\right)\qquad (n=3,4,\ldots).
  \end{equation}
  We use induction to prove \eqref{6proof}. If we take $n=3$ in the inequality \eqref{6proof}, we have $|a_2|\leq 1$, therefore the case
  $n=3$ is clear. A simple calculation gives us
  \begin{align*}
    |a_{m+1}| &\leq  \frac{1}{m}\sum_{k=1}^{m}|a_{k}|=\frac{1}{m}\left(\sum_{k=1}^{m-1}|a_{k}|+|a_m|\right)\\
    &\leq \frac{1}{m}\prod_{k=2}^{m-1}\left(1+\frac{1}{k-1}\right)+\frac{1}{m}\times
    \frac{1}{m-1}\prod_{k=2}^{m-1}\left(1+\frac{1}{k-1}\right)\\
    &=\frac{1}{m}\prod_{k=2}^{m}\left(1+\frac{1}{k-1}\right),
  \end{align*}
  which implies that the inequality \eqref{6proof} holds for $n=m+1$.
  From now \eqref{5proof} and \eqref{6proof}, the desired estimate for
$|a_n|\, (n = 3,4,\ldots)$ follows, as asserted in \eqref{est an}.
This completes the proof.
\end{proof}

The problem of finding sharp upper bounds for the coefficient
functional $|a_3-\mu a_2^2|$ for different subclasses of the
normalized analytic function class $\mathcal{A}$ is known as the
Fekete-Szeg\"{o} problem.

Recently, Ali et al. \cite{ALI} considered the Fekete-Szeg\"{o}
functional associated with the $k$th root transform for several
subclasses of univalent functions. We recall here that, for a
univalent function $f(z)$ of the form \eqref{f}, the $k$th root
transform is defined by
\begin{equation}\label{F(z)}
 \mathfrak{F}(z)=[f(z^k)]^{1/k}=z+\sum_{n=1}^{\infty}b_{kn+1}z^{kn+1}\qquad
 (z\in \Delta).
\end{equation}
Following, we consider
the problem of finding sharp upper bounds for the Fekete-Szeg\"{o}
coefficient functional associated with the $k$th root transform for
functions in the class $\mathcal{BS}(\alpha)$.

In order to prove next result, we need the following lemma due to Keogh and Merkes \cite{KM}.
Further we denote by $\mathcal{P}$ the well-known class of analytic functions $p(z)$ with
$p(0) = 1$ and $\mathfrak{Re}(p(z))>0$, $z\in\Delta$.
\begin{lemma}\label{FEK}
Let the function $g(z)$ given by
\begin{equation*}
 g(z)=1+c_1z+c_2z^2+\cdots,
\end{equation*}
be in the class $\mathcal{P}$. Then, for any complex number $\mu$
\begin{equation*}
 |c_2-\mu c_1^2|\leq 2\max\{1,|2\mu-1|\}.
\end{equation*}
The result is sharp.
\end{lemma}
\begin{theorem}\label{t3.1}
Let $0\leq \alpha<1$, $f\in\mathcal{BS}(\alpha)$ and $\mathfrak{F}$ is the
$k$th root transform of $f$ defined by \eqref{F(z)}. Then, for any
complex number $\mu$,
\begin{equation}\label{1t31}
 \left|b_{2k+1}-\mu
 b_{k+1}^2\right|\leq\frac{1}{2k}\max\left\{1,\left|\frac{2(\mu-1)}{k}+1\right|\right\}.
\end{equation}
The result is sharp.
\end{theorem}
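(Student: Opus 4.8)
The plan is to reduce the statement to the classical Keogh--Merkes functional over the Carath\'{e}odory class $\mathcal{P}$ and then invoke Lemma~\ref{FEK}. First I would record the two lowest nontrivial coefficients of $f$. By Definition~\ref{lemmaiff} there is a function $w\in\mathfrak{B}$ with $zf'(z)/f(z)-1=F_\alpha(w(z))$; writing $w=(g-1)/(g+1)$ with $g(z)=1+c_1z+c_2z^2+\cdots\in\mathcal{P}$ gives $w(z)=\tfrac12 c_1 z+\bigl(\tfrac12 c_2-\tfrac14 c_1^2\bigr)z^2+\cdots$. Since $F_\alpha(z)=z+\alpha z^3+\cdots$ has no quadratic term (see~\eqref{Falpha}), substituting and comparing coefficients in $zf'(z)=\bigl(1+F_\alpha(w(z))\bigr)f(z)$ yields $a_2=\tfrac12 c_1$ and $a_3=\tfrac14 c_2$; in particular the parameter $\alpha$ does not enter at this order.

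Next I would push these through the $k$th root transform~\eqref{F(z)}. Writing $\mathfrak{F}(z)=z\,\bigl(1+a_2z^k+a_3z^{2k}+\cdots\bigr)^{1/k}$ and expanding by the binomial series, the coefficients of $z^{k+1}$ and $z^{2k+1}$ are $b_{k+1}=a_2/k$ and $b_{2k+1}=\tfrac{a_3}{k}+\tfrac{1-k}{2k^2}a_2^2$. Forming the Fekete--Szeg\"{o} functional and substituting the expressions for $a_2,a_3$ then collapses everything to $b_{2k+1}-\mu b_{k+1}^2=\tfrac{1}{4k}\,\bigl(c_2-\nu c_1^2\bigr)$ for an explicit $\nu=\nu(k,\mu)$. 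Applying Lemma~\ref{FEK} gives $\bigl|b_{2k+1}-\mu b_{k+1}^2\bigr|\le \tfrac{1}{2k}\max\{1,|2\nu-1|\}$, and inserting the value of $\nu$ produces the right-hand side of~\eqref{1t31}.

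For sharpness I would use the two extremizers of Lemma~\ref{FEK}, namely $g(z)=(1+z)/(1-z)$ (equivalently $w(z)=z$) and $g(z)=(1+z^2)/(1-z^2)$ (equivalently $w(z)=z^2$), which via the representation~\eqref{1c7} give explicit members of $\mathcal{BS}(\alpha)$ attaining the bound in the two regimes of the maximum. The computation itself is routine; the only place demanding care is the bookkeeping in the last two steps---carrying the cross term $\tfrac{1-k}{2k^2}a_2^2$ correctly through the root transform and then matching the result to the normalized shape $c_2-\nu c_1^2$ so that Lemma~\ref{FEK} applies with the right $\nu$. I expect this matching to be the main (if modest) obstacle, since any slip in $\nu$ shifts the threshold inside the maximum and changes the stated constant.
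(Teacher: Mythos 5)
Your route is precisely the paper's: reduce to the Carath\'{e}odory class via $p=(1+w)/(1-w)$, extract $a_2,a_3$, push through the $k$th root transform, and finish with Lemma \ref{FEK}. Your intermediate formulas are all correct: $a_2=\tfrac12 c_1$, $a_3=\tfrac14 c_2$ (the paper's \eqref{a2} and \eqref{a3} simplify to exactly these, and $\alpha$ indeed does not enter), and the transform coefficients $b_{k+1}=a_2/k$, $b_{2k+1}=\tfrac{a_3}{k}+\tfrac{1-k}{2k^2}a_2^2$ agree with \eqref{bk}. The gap is the single step you left unperformed. Carrying it out gives
\[
b_{2k+1}-\mu b_{k+1}^2=\frac{1}{4k}\left(c_2-\frac{k-1+2\mu}{2k}\,c_1^2\right),
\]
so $\nu=\frac{k-1+2\mu}{2k}$ and $2\nu-1=\frac{2\mu-1}{k}$; Lemma \ref{FEK} then yields
\[
\left|b_{2k+1}-\mu b_{k+1}^2\right|\leq\frac{1}{2k}\max\left\{1,\frac{|2\mu-1|}{k}\right\},
\]
whose threshold $\frac{|2\mu-1|}{k}$ agrees with the threshold $\left|\frac{2(\mu-1)}{k}+1\right|=\frac{|2\mu+k-2|}{k}$ of \eqref{1t31} for all $\mu$ only when $k=1$. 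So ``inserting the value of $\nu$'' does \emph{not} produce the right-hand side of \eqref{1t31}: your (correct) ingredients prove a different inequality, and the matching step you yourself flagged as the main obstacle is exactly where the argument breaks.

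The mismatch is not your arithmetic but the paper's: in substituting \eqref{a2} and \eqref{a3} into \eqref{bk}, the paper loses a factor of $2$, writing $b_{2k+1}=\frac{1}{4k}\left(p_2-\frac{k-1}{k}p_1^2\right)$ where the correct coefficient is $\frac{k-1}{2k}$; this slip propagates into \eqref{b2k-bk} and hence into the statement of the theorem. Indeed, your own sharpness function $w(z)=z$ refutes \eqref{1t31} for $k\geq 2$: it gives $a_2=1$, $a_3=\tfrac12$ (independently of $\alpha$), hence for $k=2$ one has $b_3=\tfrac12$, $b_5=\tfrac18$, and at $\mu=-1$,
\[
\left|b_5+b_3^2\right|=\frac38>\frac14=\frac{1}{2k}\max\left\{1,\left|\frac{2(\mu-1)}{k}+1\right|\right\}.
\]
Consequently no correct completion of your proposal (or of any argument) can reach \eqref{1t31} as printed. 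What your computation actually establishes, sharply and by exactly the route you describe, is the corrected bound $\left|b_{2k+1}-\mu b_{k+1}^2\right|\leq\frac{1}{2k}\max\left\{1,\left|\frac{2\mu-1}{k}\right|\right\}$, with equality delivered by $w(z)=z$ or $w(z)=z^2$ according to which term realizes the maximum.
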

\begin{proof}
Using \eqref{Falpha}, we first put
\begin{equation}\label{FsumB}
1+F_\alpha(z)=1+\sum_{n=1}^{\infty}\mathcal{B}_n z^n,
\end{equation}
where
$\mathcal{B}_1=1$, $\mathcal{B}_2=0$, $\mathcal{B}_3=\alpha$, and etc.
Since $f\in\mathcal{BS}(\alpha)$, from Definition \ref{lemmaiff} and definition of
subordination, there exists $w\in\mathfrak{B}$ such that
\begin{equation}\label{2t31}
 zf'(z)/f(z)=1+F_{\alpha}(w(z)).
\end{equation}
We now define
\begin{equation}\label{3t31}
 p(z)=\frac{1+w(z)}{1-w(z)}=1+p_1z+p_2z^2+\cdots.
\end{equation}
Since $w\in\mathfrak{B}$, it follows that $p\in\mathcal{P}$. From \eqref{FsumB} and
\eqref{3t31} we have:
\begin{equation}\label{4t31}
 1+F_{\alpha}(w(z))=1+\frac{1}{2}\mathcal{B}_1p_1z+\left(\frac{1}{4}\mathcal{B}_2p_1^2+
 \frac{1}{2}\mathcal{B}_1\left(p_2-\frac{1}{2}p_1^2\right)\right)z^2+\cdots,
\end{equation}
where $\mathcal{B}_1=1$ and $\mathcal{B}_2=0$. Equating the coefficients of $z$ and $z^2$ on both sides of
\eqref{2t31} and substituting $\mathcal{B}_1=1$ and $\mathcal{B}_2=0$, we get
\begin{equation}\label{a2}
a_2=\frac{1}{2}p_1,
\end{equation}
and
\begin{equation}\label{a3}
 a_3=\frac{1}{8}p_1^2+\frac{1}{4}\left(p_2-\frac{1}{2}p_1^2\right).
\end{equation}
A computation shows that, for $f$ given by \eqref{f},
\begin{equation}\label{FF(z)}
 \mathfrak{F}(z)=[f(z^{1/k})]^{1/k}=z+\frac{1}{k}a_2z^{k+1}+\left(\frac{1}{k}a_3-\frac{1}{2}\frac{k-1}{k^2}a_2^2\right)z^{2k+1}+\cdots.
\end{equation}
From equations \eqref{F(z)} and \eqref{FF(z)}, we have
\begin{equation}\label{bk}
 b_{k+1}=\frac{1}{k}a_2\quad {\rm and}\quad b_{2k+1}=\frac{1}{k}a_3-\frac{1}{2}\frac{k-1}{k^2}a_2^2.
\end{equation}
Substituting from \eqref{a2} and \eqref{a3} into \eqref{bk}, we
obtain
\begin{equation*}
 b_{k+1}=\frac{1}{2k}p_1,
\end{equation*}
and
\begin{equation*}
 b_{2k+1}=\frac{1}{4k}\left(p_2-\frac{k-1}{k}p_1^2\right),
\end{equation*}
so that
\begin{equation}\label{b2k-bk}
 b_{2k+1}-\mu
 b_{k+1}^2=\frac{1}{4k}\left[p_2-\frac{1}{2}\left(\frac{2(\mu-1)}{k}+
 2\right)p_1^2\right].
\end{equation}
Letting
\begin{equation*}
 \mu'=\frac{1}{2}\left(\frac{2(\mu-1)}{k}+2\right),
\end{equation*}
the inequality \eqref{1t31} now follows as an application of Lemma
\ref{FEK}. It is easy to check that the result is sharp for the
$k$th root transforms of the function
\begin{equation}\label{fsharp}
  f(z)=z\exp\left(\int_{0}^{z}\frac{F_{\alpha}(w(t))}{t}dt\right).
\end{equation}
\end{proof}

Putting $k=1$ in Theorem \ref{t3.1}, we have:
\begin{corollary}\label{c3.1}
(Fekete-Szeg\"{o} inequality) Suppose that $f\in\mathcal{BS}(\alpha)$ and $0\leq \alpha<1$. Then, for any
complex number $\mu$,
\begin{equation}
 \left|a_3-\mu
 a_2^2\right|\leq\frac{1}{2}\max\left\{1,\left|2\mu-1\right|\right\}.
\end{equation}
The result is sharp.
\end{corollary}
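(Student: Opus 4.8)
The plan is to read off this result as the special case $k=1$ of Theorem \ref{t3.1}, which has already been established. When $k=1$ the $k$th root transform in \eqref{F(z)} reduces to the identity, since $\mathfrak{F}(z)=[f(z)]^{1}=f(z)$, so the transformed coefficients coincide with the original Taylor coefficients of $f$: explicitly $b_{k+1}=b_2=a_2$ and $b_{2k+1}=b_3=a_3$. Substituting $k=1$ into the bound \eqref{1t31} collapses the prefactor $1/(2k)$ to $1/2$ and the inner quantity $\left|2(\mu-1)/k+1\right|$ to $\left|2\mu-1\right|$, which is precisely the asserted inequality.

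For transparency I would also record the direct derivation, which simply fixes $k=1$ throughout the proof of Theorem \ref{t3.1}. Beginning from the subordination in Definition \ref{lemmaiff}, one writes $zf'(z)/f(z)=1+F_\alpha(w(z))$ for a Schwarz function $w$, sets $p(z)=(1+w(z))/(1-w(z))\in\mathcal{P}$, and extracts $a_2=\tfrac{1}{2}p_1$ and $a_3=\tfrac{1}{4}p_2$ from \eqref{a2}--\eqref{a3}. Hence $a_3-\mu a_2^2=\tfrac{1}{4}\left(p_2-\mu p_1^2\right)$, and applying Lemma \ref{FEK} with parameter $\mu$ gives $\left|p_2-\mu p_1^2\right|\leq 2\max\{1,|2\mu-1|\}$; dividing by $4$ yields the claim.

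There is no real obstacle in either route, as both are purely computational. The only point deserving attention is the bookkeeping of the parameter inside the maximum when passing from general $k$ to $k=1$, together with the observation that sharpness is inherited automatically: the extremal function \eqref{fsharp} has first root transform equal to itself, so the same function witnesses equality for the Fekete--Szeg\"{o} functional $|a_3-\mu a_2^2|$.
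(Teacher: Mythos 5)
Your proposal is correct and follows exactly the paper's route: the paper obtains Corollary \ref{c3.1} precisely by putting $k=1$ in Theorem \ref{t3.1}, just as you do. Your supplementary direct computation (yielding $a_3-\mu a_2^2=\tfrac{1}{4}(p_2-\mu p_1^2)$ and then invoking Lemma \ref{FEK}) is simply the proof of that theorem specialized to $k=1$, so it adds verification but no genuinely different argument.
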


It is well known that every function $f\in\mathcal{S}$ has an inverse $f^{-1}$, defined by $f^{-1}(f(z))= z$, $z\in\Delta$ and
\begin{equation*}
  f(f^{-1}(w))=w\qquad (|w|<r_0;\ \ r_0<1/4),
\end{equation*}
where
\begin{equation}\label{f-1}
  f^{-1}(w)=w-a_2w^2+(2a_2^2-a_3)w^3-(5a_2^3-5a_2a_3+a_4)w^4+\cdots.
\end{equation}
\begin{corollary}\label{c3.4}
Let the function $f$, given by \eqref{f}, be in the class $\mathcal{BS}(\alpha)$ where $0\leq \alpha<1$. Also
let the function $f^{-1}(w)=w+\sum_{n=2}^{\infty}b_nw^n$ be inverse of $f$. Then
\begin{equation}\label{b2}
  |b_2|\leq 1,
\end{equation}
and
\begin{equation}\label{b3}
  |b_3|\leq \frac{3}{2}.
\end{equation}
\end{corollary}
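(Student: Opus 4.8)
The plan is to express the inverse coefficients in terms of $a_2$ and $a_3$ through the inversion formula \eqref{f-1}, and then to reduce the two required bounds to facts already established in the excerpt. Comparing \eqref{f-1} with the expansion $f^{-1}(w)=w+\sum_{n=2}^{\infty}b_nw^n$ term by term, I read off $b_2=-a_2$ and $b_3=2a_2^2-a_3$, so that $|b_2|=|a_2|$ and $|b_3|=|a_3-2a_2^2|$.

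For \eqref{b2}, I would invoke the representation $a_2=\tfrac12 p_1$ obtained in \eqref{a2} during the proof of Theorem \ref{t3.1}, which holds for every $0\leq\alpha<1$, together with the classical Carath\'{e}odory bound $|p_1|\leq 2$ valid for all $p\in\mathcal{P}$. These combine to give $|a_2|=\tfrac12|p_1|\leq 1$, hence $|b_2|\leq 1$.

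For \eqref{b3}, the decisive observation is that $|b_3|=|a_3-2a_2^2|$ is precisely the Fekete-Szeg\"{o} functional $|a_3-\mu a_2^2|$ evaluated at the special value $\mu=2$. Applying Corollary \ref{c3.1} with $\mu=2$ therefore yields $|a_3-2a_2^2|\leq \tfrac12\max\{1,|2\cdot 2-1|\}=\tfrac12\cdot 3=\tfrac32$, which is exactly the asserted bound \eqref{b3}.

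There is no substantial obstacle here: the statement is essentially a packaging of the trivial first-coefficient estimate together with the $\mu=2$ specialization of the already-proved Fekete-Szeg\"{o} inequality. The only points requiring care are bookkeeping ones, namely matching the sign conventions in \eqref{f-1} correctly, and confirming that the relation $a_2=p_1/2$ (and hence $|a_2|\leq 1$) is available on the full range $0\leq\alpha<1$, rather than only on the narrower range $0\leq\alpha\leq 3-2\sqrt{2}$ that appears in Theorem \ref{t2.1}.
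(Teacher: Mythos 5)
Your proof is correct and follows essentially the same route as the paper: read off $b_2=-a_2$ and $b_3=2a_2^2-a_3$ from \eqref{f-1}, then obtain \eqref{b3} by applying Corollary \ref{c3.1} with $\mu=2$. You are in fact slightly more careful than the paper, which asserts $|a_2|\leq 1$ without justification on the full range $0\leq\alpha<1$ (Theorem \ref{t2.1} covers only $0\leq\alpha\leq 3-2\sqrt{2}$); your derivation via $a_2=\tfrac12 p_1$ from \eqref{a2} together with the Carath\'{e}odory bound $|p_1|\leq 2$ is exactly the right way to close that gap.
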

\begin{proof}
  Relation \eqref{f-1} give us
  \begin{equation*}
    b_2=-a_2\quad {\rm and}\quad b_3=2a_2^2-a_3.
  \end{equation*}
  Thus, we can get the estimate for $|b_2|$ by
  \begin{equation*}
    |b_2|=|a_2|\leq 1.
  \end{equation*}
  For estimate of $|b_3|$, it suffices in Corollary \ref{c3.1}, we put $\mu=2$. Hence
the proof of Corollary \ref{c3.4} is completed.
\end{proof}

\end{document}